\numberwithin{equation}{section}
\newcommand{\down}{\ensuremath{\downarrow}}
\newcommand{\ci}{\mathrm{i}}
\newcommand{\ce}{\mathrm{e}}
\newcommand{\cd}{\mathrm{d}}
\DeclarePairedDelimiter{\abs}{\lvert}{\rvert} 
\DeclarePairedDelimiter{\rbra}{(}{)} 
\DeclarePairedDelimiter{\cbra}{\{}{\}} 
\DeclarePairedDelimiter{\sbra}{[}{]} 
\newcommand{\bP}{\ensuremath{\mathbb{P}}}
\newcommand{\bR}{\ensuremath{\mathbb{R}}}
\newcommand{\bW}{\ensuremath{\mathbb{W}}}
\newcommand{\cD}{\ensuremath{\mathcal{D}}}
\newcommand{\cF}{\ensuremath{\mathcal{F}}}
\newcommand{\cR}{\ensuremath{\mathcal{R}}}
\theoremstyle{plain}
\newtheorem{Thm}{Theorem}[section]
\newtheorem{Lem}[Thm]{Lemma}
\theoremstyle{definition}
\newtheorem{Rem}[Thm]{Remark}
\theoremstyle{remark}
\theoremstyle{plain}
\newtheorem*{Thm*}{Theorem}
\newtheorem*{Lem*}{Lemma}
\newtheorem*{Prop*}{Proposition}
\newtheorem*{Cor*}{Corollary}
\newtheorem*{Conj*}{Conjecture}
\theoremstyle{definition}
\newtheorem*{Ass*}{Assumption}
\newtheorem*{Def*}{Definition}
\newtheorem*{Rem*}{Remark}
\theoremstyle{remark}
\newtheorem*{Eg*}{Example}
\title{\textbf{Supremum penalizations for L\'{e}vy processes}}
\author{Shosei Takeda\footnote{Rakunan High School, Kyoto, Japan. (email:
\texttt{takeda.shosei@gmail.com})}}
\date{}
\begin{document}
\maketitle
\begin{abstract}
  Several long-time limit theorems of one-dimensional Lévy processes weighted
  and normalized by functions of its supremum are studied.
  The long-time limits are taken via the families of exponential times and
  that of constant times, called exponential clock and constant clock,
  respectively.
\end{abstract}
{\small Keywords and phrases: one-dimensional L\'{e}vy process;
penalization; limit theorem; conditioning \\
MSC 2020 subject classifications: 60F05 (60G51; 60J25)}

\section{Introduction}
Roynette--Vallois--Yor~\cite{MR2253307, MR2229621} (see also~\cite{MR2261065,MR2504013})
have considered the exsitence of the following limit distribution weighted by
a certain adapted process \(\Gamma\)
for a Brownian motion,
which they called the \textit{penalization problem}:
\begin{align}
  \bW^\Gamma \coloneqq \lim_{s\to\infty} \frac{\Gamma_s}{\bW\sbra{\Gamma_s}} \cdot \bW,
  \label{eq:BM-penal}
\end{align}
where \((X, \bW)\) denotes the canonical representation of a one-dimensional
standard Brownian motion starting from \(0\).
Roynette--Vallois--Yor have considered many kinds of Brownian penalizations as follows:
\begin{enumerate}
  \item \textit{supremum penalization:} \( \Gamma_s = f(S_s)\),
  where \(S_s =\sup_{u\le s} X_u\);
  \item \textit{local time penalization:} \(\Gamma_s = f(L_s)\), where \(L_s\)
  denotes the local time at \(0\) of \(X\);
  \item \textit{Kac killing penalization:} \(\Gamma_s=\exp(-\int_0^s q(X_u) \, \cd u)\),
\end{enumerate}
for non-negative integrable functions \(f\) and \(q\).
In the present paper, we only discuss the supremum penalization.

Roynette--Vallois--Yor~\cite[Theorem 3.6]{MR2253307} have obtained the
following limit property called supremum penalization: for
any bounded adapted functional \(F_t\), and for any
non-negative integrable function \(f\) which satisfies
\(\int_0^\infty f(x)\, \cd x>0\), it holds that
\begin{align}\label{eq:def-Wf}
  \lim_{s\to\infty} \frac{\bW\sbra{F_t f(S_s)}}{\bW\sbra{f(S_s)}}
  = \bW\sbra*{F_t \frac{M^{(f)}_t}{M^{(f)}_0}} \eqqcolon \bW^{(f)}\sbra{F_t},
  \quad t\ge 0,
\end{align}
where \(M^{(f)}\) is the non-negative martingale given by
\begin{align}
  M^{(f)}_t = f(S_t)(S_t-X_t) + \int_{S_t}^\infty f(u)\, \cd u,
  \quad t\ge 0.
\end{align}
The class of these martingales \((M_t^{(f)})\) is known as (Brownian)
A\'{z}ema--Yor martingales.
Under the characteristic probability measure \(\bW^{(f)}\), called
\textit{penalized measure}, the supremum of the process \(X\)
is finite almost surely, i.e., \(\bW^{(f)}(S_\infty<\infty) = 1\).
In particular, the penalized measure \(\bW^{(f)}\) is singular to \(\bW\).
Let \(g=\sup\cbra{t\ge 0\colon X_t=S_\infty}\) denote the
last hitting time of the maximum.
In~\cite[Theorem 4.6]{MR2253307}, the authors have also obtained the
following sample path behaviors under the measure \(\bW^{(f)}\):

\begin{Thm}[{\cite[Theorem 4.6]{MR2253307}}]\label{Thm:Wf}
  Under \(\bW^{(f)}\), the following assertions hold:
  \begin{enumerate}
    \item \(\bW^{(f)}(S_\infty \in \cd x)=f(x)\, \cd x/\int_0^\infty f(x)\, \cd x\)
          and \(\bW^{(f)}(g<\infty)=1\);
    \item the processes \((X_s, s\le g)\) and \((X_g - X_{g+s}, s\ge 0)\) are
          independent;
    \item given \(S_\infty = x>0\), the process \((X_s, s\le u)\)
          is distributed as a Brownian motion starting from \(0\) and
          stopped at its first hitting time of \(x\);
    \item the process
           \((X_g - X_{g+u}, u\ge 0)\) is distributed as a three-dimensional Bessel
    process starting from \(0\).
  \end{enumerate}
\end{Thm}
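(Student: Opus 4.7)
The strategy is to use \(M^{(f)}/M^{(f)}_0\) as a Radon--Nikodym density and apply optional sampling at the first-passage times \(T_a := \inf\{t\ge 0\colon X_t\ge a\}\), then to identify the conditional law given \(\{S_\infty = a\}\) by letting \(\eps\downarrow 0\) on the event \(\{S_\infty\in[a,a+\eps)\}\). Throughout, set \(\bar F(a) := \int_a^\infty f(u)\,\cd u\).

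For (i), the key observation is that on \(\{T_a<\infty\}\) one has \(X_{T_a}=S_{T_a}=a\), so \(M^{(f)}_{T_a}=\bar F(a)\) is deterministic. Combined with the \(\bW\)-a.s.\ finiteness of \(T_a\), optional sampling yields
\[
  \bW^{(f)}[S_\infty\ge a] = \bW^{(f)}[T_a<\infty] = \bar F(a)/M^{(f)}_0,
\]
and differentiation in \(a\) gives the density of \(S_\infty\); finiteness of \(g\) will follow from (iv). For (iii), for any bounded \(\cF_{T_a}\)-measurable functional \(F\), the deterministic values \(\bar F(a)\) and \(\bar F(a+\eps)\) of the density at \(T_a\) and \(T_{a+\eps}\) give
\[
  \bW^{(f)}\bigl[F\,\mathbf{1}_{S_\infty\in[a,a+\eps)}\bigr] = \bW[F]\int_a^{a+\eps}f(u)\,\cd u\,/\,M^{(f)}_0,
\]
so the conditional law of \((X_s)_{s\le T_a}\) given \(S_\infty\in[a,a+\eps)\) is \(\eps\)-independent and coincides with the \(\bW\)-law; letting \(\eps\downarrow 0\) proves (iii).

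For (iv) and (ii), write \(Y_u := X_{T_a+u}-a\). The strong Markov property of \(\bW\) at \(T_a\) makes \(Y\) a standard Brownian motion independent of \(\cF_{T_a}\), and a direct substitution gives \(M^{(f)}_{T_a+t}=M^{(f_a)}_t(Y)\) with \(f_a(x):=f(a+x)\). Repeating the computation above for \(\cF_{T_a+t}\)-measurable functionals of the form \(F\cdot\tilde G(Y_{\cdot\wedge t})\) yields
\[
  \bW^{(f)}\bigl[F\,\tilde G(Y_{\cdot\wedge t})\,\mathbf{1}_{S_\infty\in[a,a+\eps)}\bigr]
  = \frac{\bW[F]}{M^{(f)}_0}\,\bW\Bigl[\tilde G(Y_{\cdot\wedge t})\,\Phi_\eps^{(a)}\Bigr],
\]
where \(\Phi_\eps^{(a)}=\bigl(f(a+S^Y_t)(S^Y_t-Y_t)+\int_{a+S^Y_t}^{a+\eps}f(u)\,\cd u\bigr)\mathbf{1}_{S^Y_t<\eps}\). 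The product structure in \(F\) and \(\tilde G\) delivers (ii) after integrating out \(S_\infty\). Dividing by \(\bW^{(f)}[S_\infty\in[a,a+\eps)]\sim f(a)\eps/M^{(f)}_0\) and letting \(\eps\downarrow 0\), the weight \(\Phi_\eps^{(a)}/(f(a)\eps)\) collapses (for a.e.\ \(a\), by continuity of \(f\)) to \((\eps-Y_t)\mathbf{1}_{S^Y_t<\eps}/\eps\), which is precisely the density of the Doob \(h\)-transform of Brownian motion killed at its first hitting of \(\eps\) by the harmonic function \(h(y)=\eps-y\); as \(\eps\downarrow 0\) this converges, under \(Y\mapsto-Y\), to the law of a three-dimensional Bessel process started at \(0\), giving (iv). Since \(\mathrm{BES}(3)\) leaves the origin instantly and never returns, \(g=T_a<\infty\) a.s., completing (i).

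The hardest step is the last limit \(\eps\downarrow 0\): rigorously identifying the asymptotic weight with the \(h\)-transform density and, after the sign flip, with the law of \(\mathrm{BES}(3)\). This requires a reflection-principle computation of the joint law of \((S^Y_t, Y_t)\) and invocation of the classical identity relating \(\mathrm{BES}(3)\) to Brownian motion conditioned to stay positive (Pitman--Williams). The remaining steps are essentially bookkeeping with the Az\'{e}ma--Yor martingale.
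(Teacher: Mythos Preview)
The paper does not contain a proof of this statement: Theorem~\ref{Thm:Wf} is quoted from Roynette--Vallois--Yor~\cite[Theorem~4.6]{MR2253307} as background, and the paper's own contributions (and proofs) concern the L\'{e}vy-process penalizations in Theorems~\ref{Thm:main-exp} and~\ref{Thm:const-result}. So there is no ``paper's own proof'' to compare your sketch against.

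That said, your outline is a reasonable reconstruction of the Brownian argument. The optional-sampling step for~(i) and~(iii) is clean and correct: the crucial point, which you identify, is that \(M^{(f)}_{T_a}=\bar F(a)\) is deterministic because \(X_{T_a}=S_{T_a}=a\) for Brownian paths. The factorization at \(T_a\) via the strong Markov property is also the right mechanism for~(ii) and~(iv). Two places deserve more care. First, you invoke ``continuity of \(f\)'' to pass to the limit in \(\Phi_\eps^{(a)}\), but \(f\) is only assumed non-negative and integrable; what you actually need is that \(a\) be a Lebesgue point of \(f\), which holds for a.e.\ \(a\) and suffices since you are computing a regular conditional law given \(S_\infty=a\). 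Second, the identification of the limiting weight with the \(\mathrm{BES}(3)\) law is, as you acknowledge, the substantive analytic step; your description conflates two limits (the approximation \(\Phi_\eps^{(a)}\approx f(a)(\eps-Y_t)\mathbf{1}_{\{S^Y_t<\eps\}}\) and the subsequent \(\eps\downarrow 0\) that produces the conditioning to stay non-positive), and a rigorous version must separate them and control the error from the first uniformly enough to survive the second.
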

The result of the limit form~\eqref{eq:BM-penal} was generalized to
the strictly stable processes by Yano--Yano--Yor~\cite{MR2744885}.
Theorem~\ref{Thm:Wf} was generalized to the general L\'{e}vy processes
by Yano~\cite{MR3127911}; see Theorem~\ref{Thm:Pf}.
We have obtained the penalization result of the form~\eqref{eq:BM-penal}
for L\'{e}vy processes.

Let \(\cD[0,\infty)\) denote the space of c\`{a}dl\`{a}g paths:
\(\omega \colon [0, \infty) \to \bR \cup \cbra{\partial}\) with
lifetime \(\zeta(\omega) = \inf\cbra{s \colon \omega(s) = \partial}\)
where \(\partial\) is the cemetery point.
Let \((X, \bP)\) denote the canonical representation
of a one-dimensional L\'{e}vy process starting from \(0\)
on \(\cD[0,\infty)\).
For \(t\ge 0\), we denote by \(\cF_t^X = \sigma(X_s, s\le t)\) the natural
filtration of \(X\) and write \(\cF_t = \bigcap_{s>t} \cF_s^X\) and
\(\cF_\infty = \sigma\rbra*{\bigcup_{t\ge 0} \cF_t}\).
The L\'{e}vy process \((X, \bP)\) can be identified
using its characteristic exponent \(\varPsi\), defined by the equation
\(
  \bP\sbra{\ce^{\ci\lambda X_t}} = \ce^{-t \varPsi(\lambda)},
  \, t \ge 0, \lambda \in \bR,
\)
which is given by the L\'{e}vy--Khinchin formula
\begin{align}
  \varPsi(\lambda)
  = \ci \gamma \lambda
  + \frac{1}{2} \sigma^2 \lambda^2
  + \int_\bR \rbra*{1 - \ce^{\ci \lambda x} + \ci \lambda x 1_{\cbra{\abs{x}< 1}}}
  \nu(\cd x),\quad \lambda\in \bR
\end{align}
for some constants \(\gamma \in \bR,\, \sigma \ge 0\)
and some measure \(\nu\), called L\'{e}vy measure, on \(\bR\)
which satisfies \(\nu(\cbra{0})=0\) and
\(\int_\bR \rbra*{x^2 \wedge 1} \nu(\cd x) < \infty\).

We denote by \(S\) and \(I\) the past supremum and the past infimum process up to time
\(t < \zeta \), i.e.,
\begin{align}
  S_t = \sup_{s\le t} X_s
  \quad\text{and}\quad
  I_t =  \inf_{s\le t} X_s.
\end{align}
We denote the hitting time of a Borel set \(A \subset \bR\) of \(X\) by
\begin{align}
  T_A = \inf \cbra{t>0 \colon X_t \in A}.
\end{align}
We assume the following three conditions:
\begin{enumerate}[label=\textbf{(\Alph*)}]
  \item \(0\) is regular for both \((-\infty, 0)\) and \((0, \infty)\)
        with respect to \(X\), that is,
        \begin{align}
          \bP(T_{(-\infty, 0)} = 0) = \bP(T_{(0, \infty)}= 0 ) = 1.
        \end{align}\label{cond:A}
  \item The resolvent of \(X\) have the absolutely continuous density, that is,
        for every \(q > 0\), there exists an integrable function \(r_q\) which
        satisfies
        \begin{align}
          \bP\sbra*{\int_0^\infty \ce^{-qt} f(X_t) \,\cd t} = \int_{-\infty}^\infty
          f(x)
          r_q(x) \, \cd x,
        \end{align}
        for all non-negative Borel functions \(f\).\label{cond:B}
  \item \(X\) is an oscillating process, i.e.,
        \begin{align}
          \bP(S_\infty = \infty) = \bP(I_\infty = - \infty) = 1.
        \end{align}\label{cond:C}
\end{enumerate}
Let \(R = S - X\) denote the reflected process of \(X\) at the supremum.
Under the above conditions~\ref{cond:A}--\ref{cond:C}, \(0\) is regular for itself
with respect to \(R\),
and hence we can define a continuous local time \(L = \rbra{L_t, t \ge 0}\)
at \(0\) of \(R\), which is normalized by
\(\bP\sbra*{\int_0^\infty \ce^{-t}\, \cd L_t} =1\).
We denote by
\(\eta_t = \inf\{s \ge 0\colon L_s > t\}\)
the right-continuous inverse of \(L\), called \textit{inverse local time of \(L\)}.
We define \(H_t = S_{\eta_t}=X_{\eta_t}\). Then
\((\eta, H)\) is a bivariate subordinator and \(\eta\) and \(H\) are called
the ascending ladder time process and the ascending ladder height process of
\(X\), respectively;
see, e.g., Kyprianou~\cite[Section 6]{MR3155252}.
Let \(\kappa\) denote the Laplace exponent of \((\eta,H)\)
given by \(\bP\sbra{\ce^{-q\eta_t-\lambda H_t}}=\ce^{-t\kappa(q, \lambda)},\,
 q\ge 0,\,\lambda\ge 0, \,t\ge 0\).

We define, for \(q \ge 0\),
\begin{align}
  h_q(x) = \bP\sbra*{\int_0^\infty \ce^{-qt} 1_{\cbra{S_t \le x}} \, \cd L_t}
  =\bP\sbra*{\int_0^\infty \ce^{-q\eta_t} 1_{\cbra{H_t \le x}}\, \cd t},
  \quad x\ge 0,
\end{align}
and \(h \coloneqq h_0\).
Note that \(h\) is the potential function of \(H\).
Clearly, we have \(h_q(x)\ge 0\), \(h_q(0)=0\) and \(h_q\) is
non-decreasing.
By Silverstein~\cite{MR573292}, the function
\(h\) is continuous, subadditive, and differentiable;
see also Chaumont~\cite[Theorem 2]{MR3098676}.
If \((X,\bP) \) is a standard Brownian motion, we have \(h(x)=x\).
If \((X, \bP)\) is a strictly \((\alpha, \rho)\)-stable process,
we have \(h(x)=x^{\alpha\rho}\), where \(\alpha \in (0,2)\) is the index
and \(\rho = \bP(X_t \ge 0)\in [1-1/\alpha, 1/\alpha] ,\, t\ge 0\)
is called the positivity parameter.
We can calculate the Laplace transform of \(h_q\): for \(q\ge 0\),
\begin{align}
  \lambda \int_0^\infty \ce^{-\lambda x}h_q(x)\, \cd x
  = \int_0^\infty \bP\sbra{\ce^{-q\eta_t-\lambda H_t}}\, \cd t
  = \frac{1}{\kappa(q, \lambda)},\quad \lambda>0.\label{eq:laplace-hq}
\end{align}
Let \(f\) be a non-negative Borel function on \([0, \infty)\) satisfying
\begin{align}\label{cond:f}
  0 < \int_0^\infty f(x)h^\prime(x) \,\cd x < \infty.
\end{align}
Then, we introduce the process given by
\begin{align}
  M_t^{(f)} =f(S_t)h(S_t - X_t) + \int_{S_t}^\infty f(x)h^{\prime}(x - X_t)
  \,\cd x, \quad t\ge 0.
\end{align}

\begin{Thm}[{\cite[Theorems 6.1 and 7.6]{MR3127911}}]\label{Thm:AY-mart}
  The process \((M_t^{(f)}, t\ge 0)\) is a \(\rbra{(\cF_t), \bP}\)-martingale
  and satisfies \(M_t^{(f)} \to 0\), \bP-a.s.\ as \(t \to \infty\).
\end{Thm}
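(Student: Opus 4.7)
The plan is to write $M^{(f)}$ as a superposition of simpler ``single-level'' martingales indexed by $y>0$, verify the martingale property in that simpler case, and then assemble by linearity; the a.s.\ convergence is handled afterwards by a truncation in the support of $f$.

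For each $y>0$ introduce
\begin{align*}
  N_t^{(y)} \coloneqq h(y-X_t)\, 1_{\cbra{S_t<y}}, \quad t\ge 0,
\end{align*}
with the convention $h(x)=0$ for $x\le 0$. A direct case analysis in the position of $S_t$ relative to $0\le a<b$ yields, for $f=1_{[a,b)}$, the identity $M_t^{(f)}=N_t^{(b)}-N_t^{(a)}$; hence the martingale property for $M^{(f)}$ on step functions is equivalent to the martingale property for each $N^{(y)}$. By linearity and a monotone-class (or Fubini-type) approximation of a general $f$ satisfying \eqref{cond:f} by simple functions---the integrability $\int_0^\infty f(x)h'(x)\,\cd x<\infty$ providing the dominating control---this reduces the martingale assertion to showing that each $N^{(y)}$ is an $((\cF_t),\bP)$-martingale.

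The martingale property of $N^{(y)}$ is the analytic core of the argument and, I expect, the principal obstacle. Heuristically, $h(y-\cdot)$ is harmonic for $X$ killed at $\sigma_y\coloneqq\inf\cbra{t>0\colon X_t\ge y}$, because $h$ is the potential function of the ascending ladder height process $H$. To make this rigorous I would start from the $q$-discounted variant
\begin{align*}
  N_t^{(y,q)} \coloneqq \ce^{-qt}\, h_q(y-X_t)\, 1_{\cbra{S_t<y}}, \quad q>0,
\end{align*}
and verify that $(N_t^{(y,q)})$ is a $\bP$-martingale by computing $\bP\sbra{N_t^{(y,q)}\mid\cF_s}$ for $s<t$, splitting the resulting expression according to whether $\sigma_y\in(s,t]$, and combining the Markov property of $X$ at time $s$ with the Laplace-transform identity \eqref{eq:laplace-hq}---i.e., with the Wiener--Hopf description of $\kappa$. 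Letting $q\downarrow 0$ and using $h_q\uparrow h$ with monotone convergence then transfers the martingale property to $N^{(y)}$.

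For the a.s.\ convergence $M_t^{(f)}\to 0$: being a non-negative martingale, $M^{(f)}$ has an a.s.\ limit $M_\infty^{(f)}\ge 0$. When $f$ is supported in $[0,K]$, once $S_t>K$---an event that occurs a.s.\ by condition~\ref{cond:C}---we have $f(S_t)=0$ while the integrand in $\int_{S_t}^\infty f(u)h'(u-X_t)\,\cd u$ vanishes identically, so $M_t^{(f)}=0$ and \emph{a fortiori} $M_\infty^{(f)}=0$. For general $f$ satisfying \eqref{cond:f}, set $f_n\coloneqq f\cdot 1_{[0,n]}$ and exploit the linearity of $f\mapsto M_t^{(f)}$ to write $M_\infty^{(f)}=M_\infty^{(f_n)}+M_\infty^{(f-f_n)}=M_\infty^{(f-f_n)}$ a.s. Fatou's lemma then yields $\bP\sbra{M_\infty^{(f-f_n)}}\le M_0^{(f-f_n)}=\int_n^\infty f(x)h'(x)\,\cd x$, which tends to $0$ as $n\to\infty$; therefore $M_\infty^{(f)}=0$ in $L^1$, and hence almost surely.
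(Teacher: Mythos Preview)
The paper does not contain a proof of this theorem: it is quoted verbatim from Yano~\cite[Theorems~6.1 and~7.6]{MR3127911} and no argument is given in the present paper. There is therefore nothing here to compare your proposal against.

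On its own merits, your outline is the natural one and essentially matches how such results are proved in the literature. The decomposition $M_t^{(1_{[a,b)})}=N_t^{(b)}-N_t^{(a)}$ with $N_t^{(y)}=h(y-X_t)\,1_{\cbra{S_t<y}}$ is correct, and the a.s.\ convergence argument via compactly supported truncations together with Fatou's lemma is clean and valid. The passage from simple $f$ to general $f$ satisfying~\eqref{cond:f} is routine once one observes, by Fubini, that $M_t^{(f)}=\int_0^\infty f(y)\,\cd_y N_t^{(y)}$ in an appropriate sense, and that $\bP\sbra{M_t^{(f)}}=M_0^{(f)}=\int_0^\infty f(x)h'(x)\,\cd x<\infty$.

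The one genuine gap is exactly the one you flag: the martingale property of $N^{(y)}$ (equivalently, the invariance of $h$ for $X$ killed upon entering $[y,\infty)$). Your proposed route via $N^{(y,q)}$ and the Laplace identity~\eqref{eq:laplace-hq} is not quite enough as written: \eqref{eq:laplace-hq} is a statement about the Laplace transform of $h_q$ in the spatial variable and does not directly give the pointwise identity $\bP\sbra{\ce^{-qt}h_q(y-X_t)\,1_{\cbra{S_t<y}}}=h_q(y)$ that you need. A more direct argument uses the representation $h_q(y)=\bP\sbra{\int_0^\infty \ce^{-qs}1_{\cbra{S_s\le y}}\,\cd L_s}$ together with the additivity of the local time $L$ and the Markov property of the reflected process $R=S-X$; alternatively one may invoke Silverstein~\cite{MR573292} or the Chaumont--Doney framework~\cite{MR2164035}, where this harmonicity is established. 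Once that step is filled in, your proof is complete.
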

The martingale \(M^{(f)}\) is called a \textit{generalized A\'{z}ema--Yor martingale}.
Let us define new probability measure in the same way as~\eqref{eq:def-Wf} as follows:
\begin{align}
  \bP^{(f)} |_{\cF_t} = \frac{M_t^{(f)}}{M_0^{(f)}} \cdot \bP|_{\cF_t},\quad t\ge 0.
\end{align}
Since \(M^{(f)}\) is a martingale, the probability measure
\(\bP^{(f)}\) is well-defined on \(\cF_\infty\).
Yano~\cite[Theorem 7.5]{MR3127911} have obtained the following sample path
behaviors under \(\bP^{(f)}\):
\begin{Thm}[{\cite[Theorem 7.5]{MR3127911}}]\label{Thm:Pf}
  Under \(\bP^{(f)}\), the following assertions hold:
  \begin{enumerate}
    \item \(\bP^{(f)}(S_\infty \in \cd x)=f(x)h'(x)\, \cd x/M_0^{(f)}\)
    and \(\bP^{(f)}(g<\infty)=1\);
    \item the processes \((X_s, s\le g)\) and \((X_g - X_{g+s}, s\ge 0)\) are
          independent;
    \item given \(S_\infty = x>0\), the process \((X_s, s\le u)\)
          is distributed as a process starting from \(0\) and
          stopped at its first hitting time of \(x\);
    \item the process
           \((X_g - X_{g+u}, u\ge 0)\) is distributed as the process conditioned to
          stay negative; see~\cite{MR3127911} (see also~\cite{MR2164035} ).
  \end{enumerate}
\end{Thm}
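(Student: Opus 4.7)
My plan is to mimic the Roynette--Vallois--Yor Brownian argument, with It\^{o} excursion theory of the reflected process \(R = S - X\) at its supremum substituting for the explicit Brownian computations, and with structural simplifications of \(M^{(f)}\) at well-chosen stopping times playing the central role.

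\textbf{Step 1 (distribution of \(S_\infty\)).} Fix \(x > 0\) and set \(T_x = T_{(x, \infty)}\); by~\ref{cond:C}, \(T_x < \infty\) \(\bP\)-almost surely. The key observation is that \(S_{T_x} = X_{T_x}\), so \(h(S_{T_x} - X_{T_x}) = h(0) = 0\) and
\begin{align*}
  M_{T_x}^{(f)} = \int_{X_{T_x}}^\infty f(y)\, h^{\prime}(y - X_{T_x}) \, \cd y.
\end{align*}
Optional sampling at the bounded stopping times \(T_x \wedge t\), together with monotone convergence as \(t \to \infty\), yields \(\bP^{(f)}(S_\infty > x) = \bP\sbra{M_{T_x}^{(f)}} / M_0^{(f)}\). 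Using the Wiener--Hopf identification \(X_{T_x} = H_{\tau_x^H}\) with \(\tau_x^H\) the first passage of the ladder-height subordinator \(H\) above \(x\), and the Fristedt-type formula expressing the joint law of the undershoot and overshoot of \(H\) through the potential \(h(\cd z)\) and the L\'{e}vy measure of \(H\), I would verify \(\bP\sbra{M_{T_x}^{(f)}} = \int_x^\infty f(y) h^{\prime}(y)\, \cd y\); differentiating in \(x\) then produces the claimed density.

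\textbf{Step 2 (decomposition at \(g\)).} For (ii)--(iv), my target is to evaluate, for bounded Borel functionals \(F\) and \(G\),
\begin{align*}
  \bP^{(f)}\sbra*{F\rbra*{(X_s)_{s \le g}}\, G\rbra*{(X_g - X_{g+u})_{u \ge 0}}},
\end{align*}
and to verify it factorizes as the integral over \(x > 0\) against \(f(x) h^{\prime}(x) / M_0^{(f)}\, \cd x\) of the law of \(X\) stopped at \(T_{[x, \infty)}\) times the conditioned-process law constructed in~\cite{MR3127911, MR2164035}. The recipe is: restrict to \(\{g \le t\}\), apply the absolute continuity \(\bP^{(f)}|_{\cF_t} = (M_t^{(f)}/M_0^{(f)}) \cdot \bP|_{\cF_t}\), use the strong Markov property at \(T_x\) combined with a Maisonneuve-type last-exit decomposition at \(g\), and finally pass to the limit \(t \to \infty\). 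The almost-sure finiteness \(\bP^{(f)}(g < \infty) = 1\) will then emerge as a byproduct of this factorization.

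\textbf{Step 3 (main obstacle).} The hardest step is the identification of the post-\(g\) process in (iv). The heuristic is that on \(\{g \le t,\, S_\infty = x\}\), to leading order \(M_t^{(f)}\) reduces to \(f(x)\, h(x - X_t)\), and it is precisely this factor \(h(x - X_t)\) that realises a Doob \(h\)-transform on the shifted process \((X_{g+u} - x)_{u \ge 0}\), yielding the conditioned process of Chaumont--Doney~\cite{MR2164035}. Making this asymptotic rigorous and controlling the resulting error terms uniformly in \(t\) constitutes the delicate part of the argument; it will be carried out along the lines of the martingale and excursion-theoretic estimates developed in~\cite[Sections~6--7]{MR3127911}.
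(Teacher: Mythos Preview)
The paper does not prove this statement at all: Theorem~\ref{Thm:Pf} is quoted verbatim from Yano~\cite[Theorem~7.5]{MR3127911} as background material, with no proof supplied in the present paper. There is therefore no ``paper's own proof'' against which to compare your proposal; the comparison you are really making is with Yano's original argument in~\cite{MR3127911}, which you in fact cite explicitly in your Step~3.

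That said, your sketch is broadly in the spirit of Yano's proof (martingale change of measure, excursion-theoretic last-exit decomposition at \(g\), and recognition of the factor \(h(S_t-X_t)\) as the Doob transform producing the process conditioned to stay negative). One point worth tightening: in Step~1 you propose to recover \(\bP\sbra{M_{T_x}^{(f)}} = \int_x^\infty f(y)h'(y)\,\cd y\) via the overshoot law of \(H\) and then differentiate, but the cleaner route---and the one actually taken in~\cite{MR3127911}---is to avoid \(T_x\) altogether and instead work with the inverse local time \(\eta_\ell\), at which \(M_{\eta_\ell}^{(f)} = \int_{H_\ell}^\infty f(y)h'(y-H_\ell)\,\cd y\); the renewal identity for the potential of \(H\) then gives the law of \(S_\infty\) directly, with no overshoot computation and no differentiation step needed.
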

Let \(\bm{e}\) be an exponential distributed random variable
with mean \(1\) which is independent of \(\cF_\infty\),
and we write \(\bm{e}_q=\bm{e}/q\) for \(q> 0\).
We have obtained the following penalization result for L\'{e}vy processes:
\begin{Thm}[Penalization with exponential clock]\label{Thm:main-exp}
  Suppose the conditions~\ref{cond:A}--\ref{cond:C} hold.
  Let \(f\) be a non-negative Borel function satisfying~\eqref{cond:f}. Define,
  for \(q>0\),
  \begin{align}
    N_t^{(q, f)} = \frac{\bP\sbra{f(S_{\bm{e}_q}); t\le \bm{e}_q| \cF_t}}{\kappa(q,0)}
    \quad \text{and} \quad
    M_t^{(q, f)} = \frac{\bP\sbra{f(S_{\bm{e}_q})| \cF_t}}{\kappa(q,0)},
    \quad t\ge 0.
  \end{align}
  Then it holds that
  \begin{align}
    \lim_{q\down 0} N_t^{(q, f)} = \lim_{q\down 0} M_t^{(q, f)} = M_t^{(f)}, \quad
    \text{\(\bP\)-a.s.\ and in \(L^1(\bP)\).}
  \end{align}
  Consequently, it holds that
  \begin{align}\label{eq:exp-penal}
    \lim_{q\down 0}\frac{\bP\sbra{F_t f(S_{\bm{e}_q})}}{\bP\sbra{f(S_{\bm{e}_q})}}
    = \bP\sbra*{F_t\frac{M_t^{(f)}}{M_0^{(f)}}} = \bP^{(f)}\sbra{F_t}.
  \end{align}
  for all bounded \(\cF_t\)-measurable functionals \(F_t\).
\end{Thm}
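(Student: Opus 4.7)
The plan is to derive an explicit formula for $N_t^{(q,f)}$ via the Markov property at time $t$ and the Wiener--Hopf factorization, then to pass to the limit $q \down 0$ by monotone convergence for the potential functions $h_q$, and finally to control the discrepancy $M_t^{(q,f)} - N_t^{(q,f)}$ using the asymptotic $q/\kappa(q,0) \to 0$ valid under~\ref{cond:C}. The penalization formula~\eqref{eq:exp-penal} will then follow from the resulting $L^1(\bP)$-convergence of $M_t^{(q,f)}$.

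\textbf{Explicit formula and monotone limit.} First, on $\cbra{\bm{e}_q \ge t}$ the memoryless property together with the Markov property give, conditionally on $\cF_t$, the identity $S_{\bm{e}_q} = X_t + \rbra{R_t \vee Z_q}$, where $R_t = S_t - X_t$ and $Z_q$ is independent of $\cF_t$ with the same law as $S_{\bm{e}_q}$ under $\bP$. The Wiener--Hopf identity $\bP\sbra{\ce^{-\lambda Z_q}} = \kappa(q,0)/\kappa(q,\lambda)$ combined with~\eqref{eq:laplace-hq} (integration by parts gives $\int_0^\infty \ce^{-\lambda x}h_q'(x)\,\cd x = 1/\kappa(q,\lambda)$) identifies the law of $Z_q$ on $(0, \infty)$ as absolutely continuous with density $\kappa(q,0) h_q'(x)$; a case split on $\cbra{Z_q \lessgtr R_t}$ and division by $\kappa(q,0)$ yields
\begin{align*}
N_t^{(q,f)} = \ce^{-qt}\rbra*{f(S_t)\, h_q(R_t) + \int_{S_t}^\infty f(y)\, h_q'(y-X_t)\, \cd y}.
\end{align*}
Next, I take $q \down 0$: the representation $h_q(b) - h_q(a) = \bP\sbra*{\int_0^\infty \ce^{-qs}\mathbf{1}_{\cbra{a < S_s \le b}}\, \cd L_s}$ and monotone convergence give $h_q(b) - h_q(a) \up h(b) - h(a)$ for every $0 \le a < b$; equivalently, the absolutely continuous measures $h_q'(x)\, \cd x$ increase monotonically to $h'(x)\, \cd x$ on Borel subsets of $(0, \infty)$. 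Since $R_t > 0$ $\bP$-a.s.\ (via time-reversal, $R_t \laweqq -I_t$, combined with~\ref{cond:A}), monotone convergence produces $\int_{S_t}^\infty f(y) h_q'(y-X_t)\, \cd y \up \int_{S_t}^\infty f(y) h'(y-X_t)\, \cd y$, finite a.s.\ because $M_t^{(f)} < \infty$ a.s.\ by Theorem~\ref{Thm:AY-mart}. Hence $N_t^{(q,f)} \up M_t^{(f)}$ $\bP$-a.s.

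\textbf{Discrepancy, $L^1$ convergence, and penalization.} Conditioning on $\cbra{\bm{e}_q \lessgtr t}$ one has
\begin{align*}
M_t^{(q,f)} - N_t^{(q,f)} = \frac{q}{\kappa(q,0)} \int_0^t \ce^{-qs} f(S_s)\, \cd s.
\end{align*}
Under~\ref{cond:C} the ascending ladder time has infinite mean (the standard dichotomy: $\bP\sbra{\eta_1} < \infty$ iff $X$ drifts to $+\infty$), so equivalently $\kappa(q,0)/q \to \infty$ and hence $q/\kappa(q,0) \to 0$ as $q \down 0$. Moreover, the identity $q\int_0^\infty \ce^{-qs}\bP\sbra{f(S_s)}\,\cd s = \kappa(q,0)\int_0^\infty f h_q'\,\cd x$ applied at any fixed $q > 0$ gives $\int_0^t \bP\sbra{f(S_s)}\,\cd s \le \ce^{qt}(\kappa(q,0)/q)\int f h_q'\,\cd x < \infty$, so $\int_0^t f(S_s)\,\cd s < \infty$ $\bP$-a.s., and $\int_0^t \ce^{-qs} f(S_s)\,\cd s \up \int_0^t f(S_s)\,\cd s$ by monotone convergence. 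Thus the discrepancy vanishes $\bP$-a.s., which combined with $N_t^{(q,f)} \to M_t^{(f)}$ gives $M_t^{(q,f)} \to M_t^{(f)}$ $\bP$-a.s. The $L^1(\bP)$-convergence follows from Scheffé's lemma, as $\bP\sbra{M_t^{(q,f)}} = M_0^{(q,f)} = \int f h_q'\,\cd x \up \int f h'\,\cd x = M_0^{(f)} = \bP\sbra{M_t^{(f)}}$, and analogously for $N_t^{(q,f)}$. Finally, for any bounded $\cF_t$-measurable $F_t$,
\begin{align*}
\frac{\bP\sbra{F_t f(S_{\bm{e}_q})}}{\bP\sbra{f(S_{\bm{e}_q})}} = \frac{\bP\sbra{F_t M_t^{(q,f)}}}{M_0^{(q,f)}} \longrightarrow \frac{\bP\sbra{F_t M_t^{(f)}}}{M_0^{(f)}} = \bP^{(f)}\sbra{F_t},
\end{align*}
which is~\eqref{eq:exp-penal}.

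\textbf{Main obstacles.} The two delicate technical points will be (i) the monotone upward convergence of the absolutely continuous potential densities $h_q'(x)\,\cd x \up h'(x)\,\cd x$, which is obtained via the direct probabilistic representation above rather than from any general analytic regularity of $h_q$, and (ii) the asymptotic $q/\kappa(q,0) \to 0$ under~\ref{cond:C}, a standard consequence of the first-moment classification of the ascending ladder time subordinator.
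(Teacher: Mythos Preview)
Your proof is correct and follows essentially the same route as the paper: the explicit formula for $N_t^{(q,f)}$ via the Markov and memoryless properties, the monotone limit $h_q\uparrow h$, the discrepancy controlled by $q/\kappa(q,0)\to 0$, and Scheff\'e's lemma for the $L^1$ convergence. Two minor differences are worth noting. First, you justify $q/\kappa(q,0)\to 0$ by invoking the infinite mean of the ascending ladder time under oscillation, whereas the paper derives the same fact from $\bP(S_{\bm{e}_q}\le x)/q\to\bP\sbra{T_{(x,\infty)}}=\infty$ combined with~\eqref{eq:PSeq}; both are standard and equivalent. Second, you add the explicit verification that $\int_0^t f(S_s)\,\cd s<\infty$ $\bP$-a.s.\ before concluding that the discrepancy vanishes, a point the paper leaves implicit but which is indeed needed (otherwise one faces a $0\cdot\infty$ form); your argument here is slightly more careful than the paper's. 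The side remark that $R_t>0$ a.s.\ is correct but not actually required for the monotone-convergence step.
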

The proof of Theorem~\ref{Thm:main-exp} will be given in Section~\ref{Sec:exp-clock}.
Theorem~\ref{Thm:main-exp} says that
the measure \(\bP^{(f)}\) can be obtained as the limit
via the family of exponential random variables \((\bm{e}_q)\) as \(q\down 0\),
instead of the family of constant times \((t)\) as \(t\to\infty\).
We call \((\bm{e}_q)\) the \textit{exponential clock}.
This approach already appeared on other penalizations. For example,
we refer to Takeda--Yano~\cite{10.1214/23-EJP903}
for local time penalization for L\'{e}vy processes,
Profeta--Yano--Yano~\cite{MR3909919} for local time penalization for diffusions.

We have also obtained a penalization result via constant clock under the following
additional assumptions:
\begin{enumerate}[label=\textbf{(D\arabic*)}]
  \item The transition semigroup of \((X, \bP)\) is absolutely continuous
        and there is a version of its densities,
        denoted by \(x \mapsto p_t(x), \, x\in \bR\),
        which are bounded for all \(t>0\);\label{cond:D1}
  \item For any \(c \in \bR\), the process \((\abs{X_t - ct}, t\ge 0)\)
        is not a subordinator;\label{cond:D2}
  \item \(\lim_{t\to\infty}\bP(X_t \ge 0) = \rho \in (0, 1)\).
        \label{cond:regularity}\label{cond:D3}
\end{enumerate}

The condition~\ref{cond:regularity} is regular variation condition.
This condition~\ref{cond:regularity} is equivalent to each of the following conditions:
\begin{enumerate}
  \item[\textbf{(D3)\('\)}] Spitzer's condition:
   \(\lim_{t\to\infty}\frac{1}{t} \int_0^t \bP(X_s \ge 0) \,
        \cd s = \rho \in (0, 1)\);
  \item[\textbf{(D3)\(''\)}] \(t \mapsto n(t< \zeta) \in \cR_\infty(-\rho), \, \rho \in (0,1)\),
        where \(\cR_\infty(-\rho)\) denotes the functions
        which are regularity varying at infinity with index \(-\rho\),
        i.e.,
        \begin{align}
          \lim_{t\to\infty}\frac{n(\lambda t<\zeta)}{n(t<\zeta)}=\lambda^{-\rho},
          \quad \lambda>0.
        \end{align}
\end{enumerate}
For more details, see Theorem VI.14 in~\cite{MR1406564} and Theorem 2
in~\cite{MR1443955}.

\begin{Thm}[Penalization with constant clock]\label{Thm:const-result}
  Suppose the conditions~\ref{cond:A}--\ref{cond:C} and~\ref{cond:D1}--\ref{cond:D3}
  hold.
  Let \(f\) be a non-negative Borel function satisfying
  \(\int_0^\infty f(x) (h(x) \vee h^\prime(x)) \,\cd x< \infty\).
  Define, for \(s\ge 0\),
  \begin{align}
    M_t^{(s, f)} =\frac{\bP\sbra{f(S_s)|\cF_t}}{n(s<\zeta)},\quad t\ge 0.
  \end{align}
   Then, it holds that
   \begin{align}
    \lim_{s\to\infty}  M_t^{(s, f)} = M_t^{(f)}, \quad
    \text{\(\bP\)-a.s.\ and in \(L^1(\bP)\).}
   \end{align}
  Consequently,
  it holds that
  \begin{align}
    \lim_{s\to\infty} \frac{\bP\sbra{F_t f(S_s)}}{\bP\sbra{f(S_s)}}
    = \bP\sbra*{F_t\frac{M_t^{(f)}}{M_0^{(f)}}} = \bP^{(f)}\sbra{F_t},
  \end{align}
  for all bounded \(\cF_t\)-measurable functionals \(F_t\).
\end{Thm}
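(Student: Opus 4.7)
The plan is to reduce the left-hand side via the Markov property to an explicit functional of $(X_t, S_t)$ and $s-t$, and then pass to the limit using the regular variation of $s \mapsto n(s<\zeta)$ together with the fluctuation identities that govern the law of the supremum.

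First, I would apply the Markov property at time $t$ to the copy $(\tilde{X}, \tilde{S})$ of $(X, S)$ independent of $\cF_t$: with $R_t = S_t - X_t$,
\begin{align*}
\bP\sbra{f(S_s) \mid \cF_t}
  = f(S_t)\, \bP(\tilde{S}_{s-t} \le R_t) + \int_{R_t}^\infty f(X_t + y)\, \bP(\tilde{S}_{s-t} \in \cd y),
\end{align*}
so that dividing by $n(s<\zeta)$ expresses $M_t^{(s, f)}$ as a deterministic function of $(X_t, S_t, s-t)$. Since $n(\cdot < \zeta) \in \cR_\infty(-\rho)$ by condition~\ref{cond:D3}, the ratio $n(s<\zeta)/n((s-t)<\zeta)$ tends to $1$, so it suffices to analyze the limit with $u = s-t$.

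The technical core is the pair of asymptotics
\begin{align*}
\frac{\bP(\tilde{S}_u \le r)}{n(u<\zeta)} \xrightarrow[u\to\infty]{} h(r),
\qquad
\frac{\bP(\tilde{S}_u \in \cd y)}{n(u<\zeta)} \xrightarrow[u\to\infty]{} h'(y)\, \cd y,
\end{align*}
where the first follows from Spitzer's condition via standard ladder-height fluctuation theory (writing $\{S_u \le r\} = \{\eta_{\sigma_r} > u\}$ with $\sigma_r = \inf\{t\colon H_t > r\}$ and applying the Darling--Kac/Dynkin-type theorem for subordinators under regular variation), while the second follows in the vague sense from the monotone pointwise convergence of $r \mapsto \bP(S_u \le r)/n(u<\zeta)$ to the continuous function $h$ (which automatically upgrades to local uniform convergence by a Dini-type argument). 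Absolute continuity of the transition densities given by~\ref{cond:D1}--\ref{cond:D2} ensures that $\bP(\tilde{S}_u \in \cd y)$ has no singular part in the relevant region, giving the vague convergence. To interchange the limit with the integral $\int_{R_t}^\infty f(X_t+y)\,\bP(\tilde{S}_u \in \cd y)$, I would prove a uniform tail estimate of the form $\bP(\tilde{S}_u \in \cd y)/n(u<\zeta) \le C(h'(y) \vee h(y))\,\cd y$ on $\{y \ge A\}$ for large $u$, which combined with the hypothesis $\int_0^\infty f(x)(h(x) \vee h'(x))\,\cd x < \infty$ activates dominated convergence. This yields pointwise almost sure convergence $M_t^{(s,f)} \to M_t^{(f)}$.

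For $L^1$ convergence I would invoke Scheffé's lemma: both $M_t^{(s,f)}$ and $M_t^{(f)}$ are non-negative, $\bP[M_t^{(s,f)}] = M_0^{(s,f)}$ by the martingale property in $t$ on $[0,s]$, and the same asymptotics applied at $t=0$ give $M_0^{(s,f)} \to M_0^{(f)} = \bP[M_t^{(f)}]$; hence a.s.\ convergence plus convergence of means forces $L^1$ convergence. The penalization conclusion then follows immediately from
\begin{align*}
\frac{\bP\sbra{F_t f(S_s)}}{\bP\sbra{f(S_s)}}
= \frac{\bP\sbra{F_t M_t^{(s,f)}}}{\bP\sbra{M_t^{(s,f)}}},
\end{align*}
the boundedness of $F_t$, and Theorem~\ref{Thm:AY-mart}.

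The main obstacle is the sharpening of the vague convergence $\bP(\tilde S_u \in \cd y)/n(u<\zeta) \to h'(y)\,\cd y$ to a form strong enough to be integrated against $f(x+\cdot)$ uniformly in $u$. This is where the stronger integrability hypothesis $\int f(x)(h(x) \vee h'(x))\,\cd x < \infty$ (compared to the exponential-clock version) must be invested; controlling the tail $y \to \infty$ requires a precise renewal-type upper bound on the density of $\tilde{S}_u$ against $h'$, which is where the absolute-continuity hypotheses~\ref{cond:D1}--\ref{cond:D2} and the index $\rho \in (0,1)$ from~\ref{cond:D3} enter decisively.
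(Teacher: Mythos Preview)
Your overall architecture matches the paper's exactly: Markov property at time $t$, regular variation to replace $n(s<\zeta)$ by $n(s-t<\zeta)$, limits for the two resulting pieces, dominated convergence, then Scheff\'e for $L^1$. The difference lies entirely in how the two key asymptotics are justified, and there is a genuine gap in your treatment of the integral term.

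You obtain only \emph{vague} convergence of the measures $\bP(\tilde S_u\in\cd y)/n(u<\zeta)$ to $h'(y)\,\cd y$, deduced from pointwise convergence of the distribution functions; the Dini argument you invoke upgrades this to locally uniform convergence of distribution functions, not of densities. Vague convergence together with a density domination bound is \emph{not} enough to pass to the limit in $\int_{R_t}^\infty f(X_t+y)\,\bP(\tilde S_u\in\cd y)/n(u<\zeta)$ for a general Borel $f$: the dominated convergence theorem acts on pointwise-a.e.\ convergent integrands, which you have not established. (For continuous compactly supported $f$ your argument would go through, but the theorem is stated for Borel $f$.) The paper closes precisely this gap by citing Chaumont--Ma\l ecki for the density-level statements $\varphi_t(x)/n(t<\zeta)\to h'(x)$ uniformly on compacta of $(0,\infty)$ and $\varphi_t(x)\le C\,(h'(x)\vee h(x))\,n(t<\zeta)$ for $t\ge t_0$; dominated convergence then applies directly to the Lebesgue integral $\int_{S_t}^\infty f(x)\,\varphi_{s-t}(x-X_t)\,\cd x$. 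The tail bound you flag as ``the main obstacle'' is exactly this second Chaumont--Ma\l ecki estimate, so your instinct about where the difficulty lies is correct. Incidentally, the paper derives your first asymptotic $\bP(S_t\le x)/n(t<\zeta)\to h(x)$ in the opposite direction, by integrating the density convergence over $[\delta,x]$ and letting $\delta\downarrow 0$; your proposed route via $\{S_u\le r\}=\{\eta_{\sigma_r}>u\}$ and Dynkin--Lamperti is plausible but not immediate, since that theorem concerns $\eta_\tau$ for deterministic $\tau$, whereas $\sigma_r$ is random.
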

The proof of Theorem~\ref{Thm:const-result} will be given
in Section~\ref{Sec:const-clock}.
\begin{Rem}
  Whether Theorem~\ref{Thm:const-result} holds also for
  \(f\) satisfying \(\int_0^\infty f(x)h^\prime (x) \, \cd x < \infty \)
  is an open problem.
\end{Rem}

\subsection*{Organization}
The remainder of this paper is organized as follows.
In Section~\ref{Sec:preliminaries}, we prepare
certain properties and
preliminary facts of L\'{e}vy processes.
In Section~\ref{Sec:exp-clock}, we prove Theorem~\ref{Thm:main-exp}.
In Section~\ref{Sec:const-clock}, we prove Theorem~\ref{Thm:const-result}.
\section{Preliminaries}\label{Sec:preliminaries}
It is known that
\begin{align}
  \lambda \int_0^\infty \ce^{-\lambda x} \bP(S_{\bm{e}_q}\le x)\, \cd t
  = \bP\sbra{\ce^{-\lambda S_{\bm{e}_q}}}
  = \frac{\kappa(q, 0)}{\kappa(q, \lambda)},
  \quad q>0,\, \lambda>0;\label{eq:laplace-Psq}
\end{align}
see, e.g.,~\cite[Section VI.2]{MR1406564} and~\cite[Theorem 6.15]{MR3155252}.
Comparing~\eqref{eq:laplace-hq} and~\eqref{eq:laplace-Psq}, we obtain
\begin{align}
  \bP(S_{\bm{e}_q}\le x) = \kappa(q, 0)h_q(x),\quad q>0, \, x\ge 0.\label{eq:PSeq}
\end{align}
We denote by
\begin{align}
  V(\cd s, \cd x) = \int_0^\infty \bP(\eta_t\in \cd s, H_t\in \cd x)\, \cd t,
  \quad s\ge 0,\, x\ge 0
\end{align}
the potential measure of the subordinator \((\eta, H)\), and we write
\(V(\cd s, x)= V(\cd s, [0,x])\).
Then, we have \(V([0,\infty), \cd x) = h'(x)\, \cd x\) and \(V([0,\infty), x)=h(x)\).

Let \(n\) denote the characteristic measure of excursions away from \(0\)
of the process \(R\).
Then, the L\'{e}vy measure of \((\eta , H)\) is
\(n(\zeta \in \cd s, S_\zeta \in \cd x)\),
and its Laplace exponent is given by
\begin{align}
  \kappa(q, \lambda) = \gamma_H \lambda
  +\int_{[0,\infty)^2} (1-\ce^{-qs-\lambda x})
  n(\zeta \in \cd s, S_\zeta \in \cd x),
\end{align}
where \(\gamma_H\ge 0\) represents the drift of \(H\);
see, e.g.,~\cite[Section 6]{MR3155252}.
By~\cite[Theorem 2]{MR3098676},
the conditions~\ref{cond:A} and~\ref{cond:B} implies that \(\bP(S_t\in \cd x)\)
is absolutely continuous on \(x\ge 0\).
Let us denote its density by \(\varphi_t(x)\).
Then, by~\cite[(5,18)]{MR3098676}, it holds that
\begin{align}
  \varphi_t(x)\, \cd x=\bP(S_t\in \cd x) = \int_0^t
   n(t-s<\zeta) V(\cd s, \cd x), \quad
  x\ge 0.\label{eq:PSt}
\end{align}
The equation~\eqref{eq:PSeq} also follows from~\eqref{eq:PSt};
in fact, it holds that, for \(q>0\)
\begin{align}
  \bP(S_{\bm{e}_q}\in \cd x)
  &=q \int_0^\infty \ce^{-qs}n(s<\zeta)\, \cd s
  \int_0^\infty \ce^{-qs} V(\cd s, \cd x) \\
  &= \int_0^\infty (1-\ce^{-qu}) n(\zeta\in \cd u)
  \int_0^\infty \ce^{-qs} V(\cd s, \cd x) \\
  &= \kappa(q, 0) \int_0^\infty \ce^{-qs} V(\cd s, \cd x),\label{eq:PSeq-dx}
\end{align}
and
\begin{align}
  \bP(S_{\bm{e}_q}\le x) = \kappa(q,0)\int_0^\infty \ce^{-qs} V(\cd s, x)
  = \kappa(q,0)h_q(x).
\end{align}

\section{The result with exponential clock}\label{Sec:exp-clock}

\begin{proof}[Proof of Theorem~\ref{Thm:main-exp}]
  By the Markov property, we have
  \begin{align}
     \bP\sbra{f(S_{\bm{e}_q});t \le \bm{e}_q|\cF_t}
     = \ce^{-qt} \widetilde{\bP}\sbra{f((\widetilde{S}_{\widetilde{\bm{e}}_q}+X_t)
     \vee S_t)},
  \end{align}
  where the symbol \(\widetilde{\phantom{X}}\) means independence.
  Hence we have
  \begin{align}
    \bP\sbra{f(S_{\bm{e}_q});t \le \bm{e}_q|\cF_t}
    & = \ce^{-qt}\cbra*{\int_0^\infty f((x+X_t)\vee S_t)\bP(S_{\bm{e}_q}\in \cd x) } \\
    & = \ce^{-qt} \cbra*{f(S_t)
     \widetilde{\bP}(\widetilde{S}_{\widetilde{\bm{e}}_q}\le S_t-X_t)
     +\int_{S_t}^\infty f(x)
     \widetilde{\bP}(\widetilde{S}_{\widetilde{\bm{e}}_q}\in \cd x-X_t) }.
  \end{align}
  It follows from~\eqref{eq:PSeq} and~\eqref{eq:PSeq-dx} that
  \begin{align}\label{eq:exp_first_raw_asconv}
    N_t^{(q, f)}
    &=\ce^{-qt}\cbra*{h_q(S_t-X_t)+\int_{S_t}^\infty f(x)
    \int_0^\infty \ce^{-qs}V(\cd s, \cd x-X_t)}.
  \end{align}
  Hence, by the monotone convergence theorem, it holds that
  \begin{align}
    N_t^{(q, f)}
    \xrightarrow[q\down 0]{}
    f(S_t)h(S_t-X_t) + \int_{S_t}^\infty f(x)V([0,\infty), \cd x-X_t)
    = M^{(f)}_t,
    \quad \text{\bP-a.s.}
  \end{align}
  Since \(N_0^{(q, f)}=M_0^{(q, f)}\), we also have \(\lim_{q\down 0}
  M_0^{(q, f)}=M_0^{(f)}\), \(\bP\)-a.s.

  Plus, we have
  \begin{align}
    M_t^{(q, f)}-N_t^{(q,f)}=\bP\sbra{f(S_{\bm{e}_q}); \bm{e}_q \le t|\cF_t}
    = q\int_0^t \ce^{-qs} f(S_s) \, \cd s.\label{eq:At}
  \end{align}
  For \(x>0\), it holds that
  \begin{align}
    \frac{q}{\kappa(q,0)}
    = \frac{\bP(S_{\bm{e}_q} \le x)}{\kappa(q,0)} \frac{q}{\bP(S_{\bm{e}_q} \le x)}.
  \end{align}
  By the monotone convergence theorem, we have
  \begin{align}
    \frac{\bP(S_{\bm{e}_q}\le x)}{q}
     & = \bP\sbra*{\int_0^\infty \ce^{-qt} 1_{\cbra{S_t \le x}}\,\cd t}
     \xrightarrow[q\down 0]{} \bP\sbra*{\int_0^\infty 1_{\cbra*{S_t\le x}}\, \cd t}
    = \bP\sbra{T_{(x, \infty)}}.
  \end{align}
  Since \(X\) oscillates, it is known that
  \(\bP\sbra{T_{(x, \infty)}} = \infty\),
  see, e.g.,~\cite[Proposition VI.17]{MR1406564}.
Combining this with~\eqref{eq:PSeq}, we obtain
  \begin{align}
    \frac{q}{\kappa(q,0)} \xrightarrow[q\down 0]{}\frac{h(x)}{\bP\sbra{T_{(x, \infty)}}}
    = 0.\label{eq:q/kq}
  \end{align}
  By~\eqref{eq:At} and~\eqref{eq:q/kq}, it holds that
  \begin{align}
   M_t^{(q, f)}-N_t^{(q,f)}
   =\frac{q}{\kappa(q,0)}\int_0^t \ce^{-qs} f(S_s) \, \cd s
    \xrightarrow[q\down 0]{}
    0,
    \quad \text{\bP-a.s.}
  \end{align}
  We proceed the proof of the \(L^1(\bP)\) convergence.
  By Fatou's lemma and since \((M_t^{(q, f)}, t\ge 0)\) is a martingale we have
  \begin{align}
    M_0^{(f)} =\bP\sbra{M_t^{(f)}} \le
    \liminf_{q\down 0}\bP\sbra{N_t^{(q,f)}}\le  \liminf_{q\down 0}\bP\sbra{M_t^{(q,f)}}
    =  \liminf_{q\down 0}M_0^{(q, f)}= M_0^{(f)}.
  \end{align}
  Thus, we may apply Scheff\'{e}'s lemma to deduce the \(L^1(\bP)\) convergence.
\end{proof}

\section{The result with constant clock}\label{Sec:const-clock}
To prove Theorem~\ref{Thm:const-result},
we utilize the following two lemmas.
\begin{Lem}[{\cite[Theorem 2]{MR3531705}}]\label{Lem:SM-conv}
  Suppose the conditions~\ref{cond:A}--\ref{cond:C} and~\ref{cond:D1}--\ref{cond:D3}
  hold.
  Then, it holds that
  \begin{align}
    \lim_{t\to\infty} \frac{\varphi_t(x)}{n(t < \zeta)} =
    h^{\prime}(x)\label{eq:lem-SM-conv}
  \end{align}
  uniformly in \(x\) on every compact subset of \((0, \infty)\).
\end{Lem}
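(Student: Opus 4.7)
The natural starting point is the excursion-theoretic identity~\eqref{eq:PSt}, namely
\begin{equation*}
\varphi_t(x) \,\cd x = \int_0^t n(t-s<\zeta)\, V(\cd s, \cd x),\qquad V([0,\infty),\cd x)=h'(x)\,\cd x.
\end{equation*}
Dividing by $n(t<\zeta)$ reshapes the question into convergence of the weighted family
\begin{equation*}
\frac{\varphi_t(x) \,\cd x}{n(t<\zeta)}
= \int_0^t \frac{n(t-s<\zeta)}{n(t<\zeta)}\, V(\cd s, \cd x),
\end{equation*}
in which, by condition~\textbf{(D3)}$''$, the weight tends pointwise to $1$ for each fixed $s\ge 0$; heuristically the right-hand side should converge to $\int_0^\infty V(\cd s,\cd x)=h'(x)\,\cd x$, the desired limit.

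I would first establish the convergence at the level of distribution functions. Integrating in $x$ over $[0,x]$ and splitting the $s$-integral as $\int_0^{Tt}+\int_{Tt}^{t}$ for a parameter $T\in(0,1)$, Potter's bounds for $n(\cdot<\zeta)\in\cR_\infty(-\rho)$ give a uniform constant upper bound for the weight on $[0,Tt]$ that permits dominated convergence, using the finiteness $V([0,\infty),[0,x])=h(x)<\infty$; on the remaining tail $[Tt,t]$ one again invokes regular variation and lets $T\uparrow 1$ to show the contribution is negligible. This yields pointwise convergence $\bP(S_t\le x)/n(t<\zeta)\to h(x)$. Since $h$ is continuous (Silverstein) and both sides are monotone in $x$, Polya's extension of Dini's theorem upgrades this to uniform convergence on compact subsets of $[0,\infty)$.

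The second step is the passage from distribution functions to densities. The aim is to use condition~\textbf{(D1)} to obtain local uniform boundedness and equicontinuity in $x$ of the rescaled densities $\varphi_t/n(t<\zeta)$ for large $t$, on any compact $K\subset(0,\infty)$. Given that compactness property, every subsequential limit is forced to coincide with $h'$ by the uniform convergence of the primitives from the previous step and the continuity of $h'$ on $(0,\infty)$ (inherited from the differentiability of $h$ recorded in Section~\ref{Sec:preliminaries}). Arzelà--Ascoli then delivers the asserted uniform density convergence on compact subsets of $(0,\infty)$.

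The main obstacle is this last regularity step: extracting uniform-in-$t$ equicontinuity of the rescaled densities. The natural tools are the boundedness of the transition densities supplied by~\textbf{(D1)}, the non-degeneracy~\textbf{(D2)}, and the Wiener--Hopf factorization relating $\bP(S_t\in\cd x)$ to $p_t$ and the ladder potential measure. The delicacy of the behaviour of $\varphi_t(x)$ as $x\downarrow 0$ is the underlying reason the uniformity is asserted only on compact subsets of the \emph{open} half-line, and is where the bulk of the technical work lies.
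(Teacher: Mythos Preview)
The paper does not prove this lemma at all; it is quoted directly from Chaumont--Ma\l ecki~\cite[Theorem~2]{MR3531705} and used as a black-box input (together with Lemma~\ref{Lem:SM-ineq}) to derive Lemma~\ref{Lem:conv-h} and ultimately Theorem~\ref{Thm:const-result}. There is therefore no proof in the paper to compare your proposal against.

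On the substance of your sketch: your first step, establishing $\bP(S_t\le x)/n(t<\zeta)\to h(x)$ directly from~\eqref{eq:PSt} via Potter's bounds and dominated convergence, is sound and would in fact give an independent proof of Lemma~\ref{Lem:conv-h} that does not rely on the density result --- note that the paper goes in the opposite logical direction, deducing Lemma~\ref{Lem:conv-h} \emph{from} Lemmas~\ref{Lem:SM-conv} and~\ref{Lem:SM-ineq}. Your second step, however, is exactly where you yourself locate the obstacle: you need uniform-in-$t$ equicontinuity of $\varphi_t/n(t<\zeta)$ on compacts of $(0,\infty)$, and you have not explained how boundedness of the transition densities $p_t$ of $X$ (condition~\ref{cond:D1}) transfers to regularity of the \emph{supremum} density $\varphi_t$. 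That transfer is precisely the content of the Chaumont--Ma\l ecki argument and involves nontrivial estimates on the bivariate ladder potential $V(\cd s,\cd x)$ and the entrance law under $n$; your proposal correctly identifies where the work lies but does not carry it out, so as a proof it remains incomplete.
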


\begin{Lem}[{\cite[Theorem 3 and its proof]{MR3531705}}]\label{Lem:SM-ineq}
  Suppose the conditions~\ref{cond:A}--\ref{cond:C} and~\ref{cond:D1}--\ref{cond:D3}
  hold.
  Then, for fixed \(x_0, t_0 > 0\), there exists a constant \(C_1 > 0\) such that
  \begin{align}
    \varphi_t(x) \le C_1 h^{\prime}(x) n(t < \zeta),
    \quad 0 < x \le x_0,\, t \ge t_0.\label{eq:lem-SM-ineq}
  \end{align}
  Additionally, there exists a constant \(C_2 > 0\) such that
  \begin{align}
    \varphi_t(x) \le C_2 (h^{\prime}(x) \vee h(x)) n(t < \zeta), \quad x > 0,\, t \ge
    t_0.
  \end{align}
\end{Lem}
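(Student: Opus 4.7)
The plan is to work from the excursion-theoretic representation~\eqref{eq:PSt},
\[
\varphi_t(x)\, dx = \int_0^t n(t-s<\zeta)\, V(ds, dx), \quad x > 0,
\]
complementing the pointwise uniform convergence from Lemma~\ref{Lem:SM-conv}. That lemma already delivers both inequalities uniformly in $x$ on any compact subinterval of $(0, \infty)$, so the content of Lemma~\ref{Lem:SM-ineq} is an extension that is uniform down to $x = 0$ (for both inequalities) and up to large $x$ (for the second). I would handle the two regions separately, and the whole plan rests on combining Potter's bound for the regularly varying tail $n(\cdot<\zeta)$ with renewal estimates for the ladder height subordinator $H$.

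For the first inequality on $0 < x \le x_0$, $t \ge t_0$, I would fix $a \in (0, 1)$ and split at $s = at$:
\[
\varphi_t(x)\, dx = \int_0^{at} n(t-s<\zeta)\, V(ds, dx) + \int_{at}^t n(t-s<\zeta)\, V(ds, dx).
\]
On the early piece, $n(t-s<\zeta) \le n((1-a)t<\zeta) \le 2(1-a)^{-\rho} n(t<\zeta)$ for $t$ large (Potter bound, justified by~\ref{cond:D3}), while $V([0, at], dx) \le V([0, \infty), dx) = h'(x)\, dx$; this yields $C h'(x)\, n(t<\zeta)$ as desired. The late piece is the main obstacle, since $n(t-s<\zeta)$ blows up as $s \uparrow t$; to compensate one must exploit the smallness of $V((at, t], [0, x_0])$. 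Writing this joint mass as $\int_0^\infty \bP(\eta_u \in (at, t],\, H_u \le x_0)\, du$ and using that $H$ drifts to $+\infty$ by~\ref{cond:C}, the constraint $\{H_u \le x_0\}$ forces $u$ to be small in a quantitative renewal sense. Combining this decay with the Potter bound on $n$ should give the claimed late-piece estimate, but obtaining it uniformly in $x \in (0, x_0]$ requires care; a plausible route is to first control the truncated integral $\int_0^{x_0} (\cdot)\, dx$ via Fubini and then recover the pointwise bound from the monotone structure of the ladder potential.

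For the second inequality at $x > x_0$, the extra feature is the appearance of $h(x)$ in place of $h'(x)$. I would reduce to the first inequality by applying the strong Markov property at $T_{(x_0,\infty)}$, writing the density $\varphi_t(x)$ for $x > x_0$ as a convolution over the overshoot distribution of the supremum density of the shifted process. Combined with the subadditivity of $h$ (so that $h(x) \le h(x_0) + h(x-x_0)$) and the first inequality applied to the shifted process, this gives the bound by $h(x) n(t<\zeta)$ for large $x$. An alternative route is to bound $\varphi_t$ directly by $\|p_t\|_\infty$ from~\ref{cond:D1} and compare with $h(x) n(t<\zeta)$ using the Laplace identity~\eqref{eq:laplace-hq}.

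The principal hurdle is the late-piece estimate in the first inequality: showing that $V((at, t], [0, x_0])$ decays rapidly enough in $t$ to absorb the blow-up of $n(t-s<\zeta)$ near $s=t$, uniformly in $x \in (0, x_0]$. This is where the regular variation~\ref{cond:D3} interacts most delicately with the renewal structure of $H$, and it is the place where condition~\ref{cond:D1} (absolutely continuous, bounded transition densities) is likely needed to convert a rough integral bound into a uniform pointwise density bound.
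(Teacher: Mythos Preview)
The paper does not prove this lemma; it is quoted as a direct citation from Chaumont--Ma\l ecki~\cite{MR3531705} (Theorem~3 together with material extracted from its proof), and no argument is reproduced in the present paper. There is therefore nothing here to compare your proposal against.

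That said, your outline is a reasonable plan and is in the spirit of how such density bounds are obtained in~\cite{MR3531705}: start from the representation~\eqref{eq:PSt}, split the time integral at a fixed fraction of~$t$, control the early piece by Potter's bounds for the regularly varying tail $t\mapsto n(t<\zeta)$ together with $V([0,\infty),\cd x)=h'(x)\,\cd x$, and identify the late piece as the genuine difficulty. You are right that this is where condition~\ref{cond:D1} enters, and your instinct that the appearance of $h(x)$ rather than $h'(x)$ for large~$x$ comes from a cruder global bound (via $\|p_t\|_\infty$ or a Markov-property reduction) is also on target. If you want to check the details of the late-piece estimate you flag as the principal hurdle, you will need to consult~\cite{MR3531705} directly; the present paper treats the lemma entirely as a black box.
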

From these, we deduce the following lemma.

\begin{Lem}\label{Lem:conv-h}
  Suppose the conditions~\ref{cond:A}--\ref{cond:C} and~\ref{cond:D1}--\ref{cond:D3}
  hold.
  Then, for all \(x \ge 0\), we have
  \begin{align}
    \lim_{t\to\infty} \frac{\bP(S_t \le x)}{n(t < \zeta)} = h(x).
  \end{align}
\end{Lem}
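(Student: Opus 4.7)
The plan is to write the distribution function as the integral of its density and pass to the limit under the integral sign by dominated convergence, using the two lemmas of Chaumont--Małecki quoted above.

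First I would dispose of the boundary case $x=0$: by condition~\ref{cond:A}, $0$ is regular for $(0,\infty)$, so $\bP(S_t=0)=0$ for every $t>0$, while $h(0)=0$ by construction, hence both sides of the claimed identity vanish. So assume $x>0$. Since $\bP(S_t\in\cd y)=\varphi_t(y)\,\cd y$ on $[0,\infty)$ by~\eqref{eq:PSt}, we have
\begin{align}
  \frac{\bP(S_t\le x)}{n(t<\zeta)}
  = \int_0^x \frac{\varphi_t(y)}{n(t<\zeta)}\,\cd y.
\end{align}

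Next, I would apply Lemma~\ref{Lem:SM-conv}, which gives the pointwise convergence
\begin{align}
  \frac{\varphi_t(y)}{n(t<\zeta)} \xrightarrow[t\to\infty]{} h'(y)
\end{align}
for every $y\in(0,x]$, and Lemma~\ref{Lem:SM-ineq} with $x_0=x$, which provides a constant $C_1>0$ and some $t_0>0$ such that
\begin{align}
  \frac{\varphi_t(y)}{n(t<\zeta)} \le C_1 h'(y),\quad 0<y\le x,\ t\ge t_0.
\end{align}
Since $h$ is differentiable with $h(0)=0$, the dominating function $y\mapsto C_1 h'(y)$ is integrable on $[0,x]$, with $\int_0^x h'(y)\,\cd y = h(x)<\infty$. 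Dominated convergence therefore yields
\begin{align}
  \lim_{t\to\infty}\frac{\bP(S_t\le x)}{n(t<\zeta)}
  = \int_0^x h'(y)\,\cd y = h(x),
\end{align}
which is the desired conclusion.

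The only non-routine point is that the uniform convergence in Lemma~\ref{Lem:SM-conv} fails at $y=0$, so one cannot simply integrate the uniform limit; however this difficulty is precisely what Lemma~\ref{Lem:SM-ineq} is designed to resolve, providing the integrable envelope $C_1 h'$ that legitimizes dominated convergence near the origin. Once those two lemmas are in hand, the argument is a one-line application of dominated convergence.
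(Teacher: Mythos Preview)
Your proof is correct and, in fact, a bit more direct than the paper's. Both arguments rest on the same two inputs, Lemma~\ref{Lem:SM-conv} (pointwise convergence of \(\varphi_t/n(t<\zeta)\) to \(h'\)) and Lemma~\ref{Lem:SM-ineq} (the envelope \(\varphi_t(y)\le C_1 h'(y)\,n(t<\zeta)\) on \((0,x_0]\)), but they combine them differently. You apply dominated convergence in one stroke: Lemma~\ref{Lem:SM-conv} gives the a.e.\ limit of the integrand and Lemma~\ref{Lem:SM-ineq} supplies the integrable majorant \(C_1 h'\) on \([0,x]\), so the integral converges to \(\int_0^x h'(y)\,\cd y=h(x)\). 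The paper instead runs a \(\liminf/\limsup\) argument: it integrates the uniform convergence over \([\delta,x]\) to get \(\liminf\ge h(x)-h(\delta)\) and lets \(\delta\downarrow 0\); for the upper bound it posits \(\limsup=h(x)+\alpha\), uses the convergence on \([y,x]\) to transfer this excess \(\alpha\) down to arbitrarily small \(y\), and then forces \(\alpha=0\) from the bound \(\bP(S_t\le y)\le C_1 h(y)\,n(t<\zeta)\) together with \(h(0)=0\).

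Your route is shorter and exposes exactly why Lemma~\ref{Lem:SM-ineq} is needed (it is precisely the DCT majorant near \(y=0\), as you point out). The paper's route avoids invoking dominated convergence explicitly and instead squeezes the limit by hand, which is a little more elementary in flavour but longer. One small remark: your use of \(\int_0^x h'(y)\,\cd y=h(x)\) is justified here not merely by differentiability of \(h\) but by the relation \(V([0,\infty),\cd y)=h'(y)\,\cd y\) and \(V([0,\infty),x)=h(x)\) recorded in Section~\ref{Sec:preliminaries}, so \(h\) is indeed absolutely continuous.
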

\begin{proof}
  Integrating~\eqref{eq:lem-SM-conv} on \([\delta, x]\subset (0,\infty)\),
  we obtain
  \begin{align}\label{eq:lem-conv-with-delta}
    \lim_{t\to\infty} \frac{\bP(\delta \le S_t \le x)}{n(t < \zeta)} = h(x) - h(\delta)
  \end{align}
  Thus we have
  \begin{align}
    \liminf_{t\to\infty} \frac{\bP(S_t \le x)}{n(t < \zeta)} \ge h(x) - h(\delta).
  \end{align}
  Letting \(\delta \down 0\), we obtain
  \begin{align}
    \liminf_{t\to\infty}  \frac{\bP(S_t \le x)}{n(t < \zeta)} \ge h(x).
  \end{align}
  On the other hand, by integrating~\eqref{eq:lem-SM-ineq}, we obtain
  \begin{align}\label{eq;lem-bdd}
    \bP(S_t \le x) \le C_1 h(x) n (t < \zeta), \quad 0 \le x \le x_0, \, t_0 \le t,
  \end{align}
  here we use the fact that
  \(h(0) = 0\) and \(\bP(S_t = 0 ) = 0 \) for all \(t>0\).
  Suppose now that
  \begin{align}
    \limsup_{t\to\infty} \frac{\bP(S_t \le x)}{n(t < \zeta)} = h(x) + \alpha.
  \end{align}
  Then by~\eqref{eq:lem-conv-with-delta}, for \(0 < y \le x\),
  we have
  \begin{align}
    \limsup_{t\to\infty} \frac{\bP(S_t \le y)}{n(t < \zeta)} = h(y) + \alpha.
  \end{align}
  Since~\eqref{eq;lem-bdd} and \(h(0)=0\), we deduce that \( \alpha = 0\).
  Hence we obtain the desired result.
\end{proof}

\begin{proof}[Proof of Theorem~\ref{Thm:const-result}]
  Using the Markov property, we obtain, for \(s>t\),
  \begin{align}
    \bP\sbra{f(S_s)|\cF_t}
     & = \widetilde{\bP}\sbra{f(S_t \vee (\widetilde{S}_{s-t}+X_t))} \\
     & = \int_0^\infty f(S_t \vee (x+X_t)) \varphi_{s-t}(x)\, \cd
    x                                                                \\
     & = f(S_t) \widetilde{\bP}\rbra{\widetilde{S}_{s-t}\le S_t -
      X_t} +
    \int_{S_t}^\infty f(x)\varphi_{s-t}(x-X_t) \, \cd x.
  \end{align}
  Hence, by Lemmas~\ref{Lem:SM-ineq} and~\ref{Lem:conv-h},
  and by the dominated convergence theorem,
  it holds that
  \begin{align}
    \frac{\bP\sbra{f(S_s)|\cF_t}}{n(s<\zeta)}
     & = \frac{n(s-t<\zeta)}{n(s<\zeta)}
    \cbra*{f(S_s)\frac{\widetilde{\bP}\rbra{\widetilde{S}_{s-t} \le
          S_t-X_t}}{n(s-t<\zeta)}
    + \int_{S_t}^\infty f(x)\frac{\varphi_{s-t}(x-X_t)}{n(s-t<\zeta)}\, \cd x} \\
     & \xrightarrow[s\to\infty]{} f(S_t) h(S_t-X_t) + \int_{S_t}^\infty f(x) h^\prime (x) \,\cd x
     = M_t^{(f)},
  \end{align}
  \bP-a.s. From Scheff\'{e}'s lemma,
  the convergence is also holds in \(L^1(\bP)\), and
  the proof is now complete.
\end{proof}

\noindent\textbf{Acknowledgments} The author would like to express
his deep gratitude to Professor
Kouji Yano for his helpful advice and encouragement. This work was supported by JSPS
Open Partnership Joint Research Projects grant no. JPJSBP120249936.

\end{document}